\newtheorem{thm}{Theorem}[section]
\newtheorem{lem}[thm]{Lemma}
\newtheorem{rem}[thm]{Remark}
\theoremstyle{remark}
\newcounter{fignum}
\begin{document}

\title{A note on the existence of an alternating sign on a spanning tree of graphs}

\author{Dongseok Kim}
\address{Department of Mathematics \\Kyonggi University
\\ Suwon, 443-760 Korea}
\email{dongseok@kgu.ac.kr}

\author{Young Soo Kwon}
\address{Department of Mathematics \\Yeungnam University \\Kyongsan, 712-749, Korea}
\email{yskwon@yu.ac.kr}

\author{Jaeun Lee}
\address{Department of Mathematics \\Yeungnam University \\Kyongsan, 712-749, Korea}
\email{julee@yu.ac.kr}
\thanks{The third author was supported by the Korea Science and Engineering Foundation
 grant funded by the Korea government(MEST) (2009-0071547).}


\begin{abstract}
For a spanning tree $T$ of a connected graph $G$ and for a
labelling $\phi: E(T) \rightarrow \{ + , - \}$, $\phi$ is called
an \emph{alternating sign} on a spanning tree $T$ of a graph $G$
if for any cotree edge $e \in E(G)-E(T)$, the unique path in $T$
joining both end vertices of $e$ has alternating signs. In the
present note, we prove that any graph has a spanning tree $T$ and
an alternating sign on $T$.
\end{abstract}

\maketitle

\section{Introduction}

For a spanning tree $T$ of a connected graph $G$, a labelling
$\phi: E(T) \rightarrow \{ + , - \}$ is called an
\emph{alternating sign} on a spanning tree $T$ of a graph $G$ if
for any cotree edge $e \in E(G) \setminus E(T)$, the unique path
$v_0, v_1, v_2, \ldots, v_{\ell}$ in $T$ joining both end vertices
of $e$ satisfies that $\phi(v_i v_{i+1}) \neq \phi(v_{i+1}
v_{i+2})$ for any $i=0, 1, \ldots, \ell -2$.  The motivation of
this research is the following:  Bipartite graphs naturally rise
in knot theory as induced graphs of Seifert surfaces of links.
Plumbed Seifert surfaces play a key role in the research of the
geometry of knot complements~\cite{Stallings:const}. The existence of an alternating sign
on a spanning tree of bipartite graphs is a key ingredient to show
the existence of such a surface~\cite{FHK:openbook, HW:plumbing}.

In the present note, we prove  that any graph (not necessarily
bipartite graph) has such a spanning tree $T$ and an alternating
sign on  $T$. Note that if any graph without multiple edges and
loops has such a tree and an alternating sign, then any graph with
multiple edges or loops also has them. Hence in this note, we
assume that any graph has no multiple edges and no loops. For any
edge $e = \{ u, v \}$ in $G$, we simply denote the edge $e$ by
$uv$.

\section{Existence of alternating signs}

For a connected graph $G$, if $G$ has a Hamilton path, namely, a
spanning tree $T$ isomorphic to a path, then the assignment of
signs to the edges of $T$ alternately is an alternating sign on
$T$. In this section, we show that even though a graph $G$ does
not contain a Hamilton path, $G$ has such a tree and an
alternating sign.

\begin{lem} \label{increase-or-decrease}
Let $G$ be a connected graph and $v_*$ be a fixed vertex of $G$.
Then there is a spanning tree $T$ of $G$ such that for any cotree
edge $e \in E(G)\setminus E(T)$,
 the unique path $v_0, v_1, v_2, \ldots , v_{\ell}$ in $T$ joining both end vertices
of $e$ satisfies that either
$$d_T(v_*, v_0) < d_T(v_*, v_1) < d_T(v_*, v_2)< \ldots < d_T(v_*, v_{\ell}) < d_T(v_*, v)$$
or
$$d_T(v_*, v_0) > d_T(v_*, v_1) > d_T(v_*, v_2)> \ldots > d_T(v_*, v_{\ell}) > d_T(v_*, v).$$
\end{lem}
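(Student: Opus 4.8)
The plan is to let $T$ be a depth-first search (DFS) spanning tree of $G$ rooted at the fixed vertex $v_*$, and to regard $T$ as a rooted tree with root $v_*$. The structural feature I will exploit is the classical fact that a DFS tree of an \emph{undirected} graph has no cross edges: every edge of $G$ joins a vertex to one of its ancestors or descendants in $T$. In particular, every cotree edge $e = uv \in E(G)\setminus E(T)$ is a back edge, so that one of $u, v$ is an ancestor of the other in $T$.

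Granting this, the lemma follows quickly. Since $T$ is a tree, $d_T(v_*, x)$ is just the length of the unique path from $v_*$ to $x$, i.e. the depth of $x$ in the rooted tree $T$; hence $d_T(v_*, \cdot)$ increases by exactly $1$ at each step as one descends from a parent to a child. Now fix a cotree edge $e = uv$ and assume without loss of generality that $u$ is an ancestor of $v$. Then the unique path in $T$ joining the two endpoints of $e$ is the ancestor-descendant chain $u = v_0, v_1, \ldots, v_\ell, v$, in which each vertex is the parent of the next, so that $d_T(v_*, v_0) < d_T(v_*, v_1) < \cdots < d_T(v_*, v_\ell) < d_T(v_*, v)$. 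This is precisely the first of the two asserted alternatives; reading the same path from the endpoint $v$ instead yields the second. As $e$ was an arbitrary cotree edge, this proves the statement.

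I expect the main obstacle to be establishing the no-cross-edge property, which carries essentially all the content of the argument. I would prove it in the standard way using the DFS discovery order: when the search first examines the edge $uv$, one endpoint, say $u$, is already discovered, and since DFS does not finish exploring $u$ until all of its neighbours have been visited, the other endpoint $v$ must be discovered within the subtree rooted at $u$; this forces $v$ to be a descendant of $u$, and hence $uv$ to be a back edge. Alternatively, to avoid invoking DFS as a black box, one can build the same tree by an explicit recursive construction and carry the back-edge property along as an invariant. Either way, once that property is in hand the remaining steps are only routine consequences of the tree metric.
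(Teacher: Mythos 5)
Your proof is correct, but it takes a genuinely different route from the paper's. The paper argues extremally: it chooses a spanning tree $T_M$ maximizing $\Psi(T)=\sum_{v\in V(G)}d_T(v_*,v)$ and shows that if the tree path of some cotree edge failed to be monotone, one could swap that cotree edge in for the tree edge at the interior ``local minimum'' $v_i$ of the path and strictly increase $\Psi$, contradicting maximality. You instead take $T$ to be a DFS tree rooted at $v_*$ and invoke the classical fact that a DFS tree of an undirected graph has no cross edges, so every cotree edge joins an ancestor to a descendant; its tree path is then a descending chain along which the depth $d_T(v_*,\cdot)$ increases by exactly one per step, which is the required monotonicity (and reversing the path gives the other alternative). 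This is a valid identification of the lemma as the existence of a normal (Tr\'emaux) spanning tree with prescribed root: strict monotonicity of $d_T(v_*,\cdot)$ along the path between the endpoints of $e$ is exactly the statement that those endpoints are comparable in the tree order, since otherwise the path would rise to their meet and descend again, creating an interior minimum. As for what each approach buys: yours yields an explicit linear-time construction and rests on a single well-known invariant, whose standard discovery-time proof you sketch adequately (that sketch is the one place where all the content lives, and you rightly flag it); the paper's exchange argument is self-contained, avoids any algorithmic machinery, and directly supplies the local improvement step referred to in the paper's Remark, at the cost of some delicate bookkeeping about the unique interior minimum and the distance comparisons after the swap. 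The two constructions are closely related: any maximizer of $\Psi$ has the same ancestor--descendant property for cotree edges, so the paper's extremal trees are themselves normal spanning trees of the kind DFS produces.
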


\begin{proof}
Let $\mathcal{T}_G$ be the set of all spanning trees of $G$. We
define a function $\Psi:\mathcal{T}_G \to R$ by $\Psi(T)= \sum_{v
\in V(G)} d_T(v_*, v)$, where $d_T(v_*, v)$ is the length of the
unique path joining $v_*$ and $v$ in $T$. Let $T_M$ be a spanning
tree of $G$ such that $\Psi(T_M)=\max \{\Psi(T): T \in
\mathcal{T}_G\}$. We will show that
 $T_M$ is a spanning tree that completes the proof. Assume that $T_M$ does not satisfy the property.
 There is a cotree
 edge $e \in E(G)\setminus E(T)$ which does not satisfy the condition. Let
  $v_0, v_1, v_2, \ldots , v_{\ell}$ be the unique path in $T$ from $u=v_0$ to $v=v_\ell$. Then
  it is clear that  $d_T(v_*, v_0) > d_T(v_*, v_1)$. For a convenience,
 let $d_T(v_*, v_0) \leq d_T(v_*, v_\ell)$.  Then there exists an $i \in \{1,2,\ldots, \ell-1 \}$ such that
 $d_T(v_*, v_{i-1}) > d_T(v_*, v_i)$ and
 $d_T(v_*, v_i) < d_T(v_*, v_{i+1})$. Note that such an $i$
 uniquely exists.  Let $T'$
be the spanning tree of $G$ having edge set
 $E(T) \cup \{ e \} \setminus \{ v_{i-1}v_i \}$.
 Then $d_{T_M}(v_*, v) \leq d_{T'}(v_*, v)$ for all $v \in V(G)\ \{v_{i-1}\}$.
 Since $$d_{T'}(v_*, v_{i-1}) > d_{T'}(v_*, v_0) >  d_{T'}(v_*, v_\ell) = d_{T_M}(v_*, v_\ell) \geq d_{T_M}(v_*,
 v_{i-1}),$$
 we have $\Psi(T_M)=\sum_{v \in V(G)} d_{T_M}(v_*, v) < \sum_{v \in V(G)} d_{T'}(v_*, v) =\Psi(T')$. This contradicts
  the maximality of $\Psi(T_M)$. It completes the proof. \end{proof}

\begin{thm} \label{main-result}
Any connected graph $G$ has a spanning tree $T$ and an alternating
sign on $T$.
\end{thm}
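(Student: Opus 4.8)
The plan is to feed the spanning tree produced by Lemma~\ref{increase-or-decrease} into a sign assignment that depends only on the distance from the root $v_*$. First I would fix an arbitrary vertex $v_*$ of $G$ and let $T$ be the spanning tree guaranteed by Lemma~\ref{increase-or-decrease} for this choice of $v_*$; for every vertex $v$ write $d(v) = d_T(v_*, v)$ for its distance from the root in $T$. The key structural observation I would record is that, since $T$ is a tree, the two endpoints of any tree edge differ in distance from $v_*$ by exactly $1$. Consequently, along the tree path $v_0, v_1, \ldots, v_{\ell}$ joining the ends of a cotree edge $e$, consecutive distances $d(v_i)$ and $d(v_{i+1})$ differ by $\pm 1$, and Lemma~\ref{increase-or-decrease} guarantees that these differences all have the same sign. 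In other words, each such path is a straight descent or a straight ascent through consecutive levels of $T$.

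With this in hand I would define the sign of a tree edge by the parity of the level of its deeper endpoint. Concretely, for a tree edge $xy$ with $d(y) = d(x) + 1$, set $\phi(xy) = +$ when $d(y)$ is odd and $\phi(xy) = -$ when $d(y)$ is even. Now take any cotree edge $e$ and its tree path $v_0, v_1, \ldots, v_{\ell}$; by the observation above we may assume, after possibly reversing the path, that $d(v_{i+1}) = d(v_i) + 1$ for every $i$. The deeper endpoint of the edge $v_i v_{i+1}$ is then $v_{i+1}$, whose level equals $d(v_0) + i + 1$. Hence $\phi(v_i v_{i+1})$ is determined by the parity of $d(v_0) + i + 1$, which flips each time $i$ increases by $1$. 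Therefore $\phi(v_i v_{i+1}) \neq \phi(v_{i+1} v_{i+2})$ for all $i$, so the path has alternating signs and $\phi$ is an alternating sign on $T$.

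The entire argument rests on Lemma~\ref{increase-or-decrease}, so the one point that genuinely needs care is the reduction from its \emph{strictly monotone distance} conclusion to the statement that every cotree path climbs or descends one level at a time. This is where the tree structure is used: because $T$ is a tree, a monotone distance sequence along a path is forced to be an arithmetic progression with common difference $1$, which is exactly what makes the parity coloring alternate. I expect no further obstacle; once the levels are seen to be consecutive, the parity assignment works verbatim and no case analysis beyond the up/down reversal is required.
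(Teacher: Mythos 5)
Your proposal is correct and follows essentially the same route as the paper: you take the tree from Lemma~\ref{increase-or-decrease} and color each tree edge by the parity of $\max\{d_T(v_*,u),d_T(v_*,v)\}$ (your ``level of the deeper endpoint'' is exactly the paper's $m(u,v)$, with the roles of $+$ and $-$ harmlessly swapped). The only difference is that you spell out the step the paper dismisses as easy, namely that tree edges join consecutive levels so a monotone cotree path ascends or descends one level at a time, which is a correct and welcome addition.
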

\begin{proof}
Let  $v_*$ be a fixed vertex of $G$. By Lemma
\ref{increase-or-decrease}, there is a spanning tree $T$ of $G$
such that for any cotree edge $e \in E(G)\setminus E(T)$,
 the unique path $v_0, v_1, v_2, \ldots , v_{\ell}$ in $T$ joining both end vertices
of $e$ satisfies that either
$$d_T(v_*, v_0) < d_T(v_*, v_1) < d_T(v_*, v_2)< \ldots < d_T(v_*, v_{\ell}) < d_T(v_*, v)$$
or
$$d_T(v_*, v_0) > d_T(v_*, v_1) > d_T(v_*, v_2)> \ldots > d_T(v_*, v_{\ell}) > d_T(v_*, v).$$
Define $\phi: E(T) \rightarrow \{+, - \}$   as follows: For any
edge $e=uv \in T$, $\phi( uv) = +$ if $m(u, v)$ is even; and
$\phi( uv) = -$ if $m(u, v)$ is odd, where $m(u,v)= \max\{d_T(v_*,
u), d_T(v_*, v)\}$.  Then one can easily show that $\phi$ is an
alternating sign on $T$.
\end{proof}

\begin{rem}
The proof of Lemma \ref{increase-or-decrease} gives an algorithm
to find a spanning tree of $G$ having an alternating sign and the
proof of Theorem \ref{main-result} gives a way to assign
alternating signs.
\end{rem}


\end{document}